\documentclass[12pt]{article} 
\usepackage[cp1251]{inputenc}
\usepackage{graphicx}
\usepackage{cite}
\usepackage{textcase}
\usepackage{amsfonts}
\usepackage{amsmath}
\usepackage[russian]{babel}

\newtheorem{theorem}{Теорема}
\newtheorem{remark}{Замечание}
\newtheorem{proof}{Доказательство}

\begin{document} 

\begin{center}
\textbf{Управление динамическими системами при ограничениях на входные и выходные сигналы}
\end{center}

\begin{center}
 Игорь Борисович~Фуртат
 \\
 Гущин Павел Александрович
 \\
 Нгуен Ба Хю
\end{center}

\begin{center}
Институт проблем машиноведения РАН, 2021 г.
\end{center}

\begin{abstract}

В статье рассмотрено развитие метода, предложенного в публикации И.Б. Фуртата, П.А. Гущина,  <<Автоматика и телемеханика>>, 2021, № 4, на системы с произвольным соотношением количества входных и выходных сигналов и гарантией нахождения их в заданном множестве. 
Для решения задачи предложены две последовательные замены координат. 
Первая замена сводит выходную переменную объекта к новой переменной, размерность которой не превосходит размерности вектора управления. 
Вторая замена позволяет перейти от задачи управления с ограничениями к задаче управления без ограничений. 
В качетсве иллюстрации работоспособности метода рассмотрено решение двух задач. 
Первая задача -- управление по состоянию линейными системами с учетом ограничений на сигнал управления и фазовые переменные. 
Вторая задача -- управление по выходу линейными системами с ограничением на выходной сигнал и сигнал управления. 
В обеих задачах проверка устойчивости замкнутой системы формулируется в терминах разрешимости линейных матричных неравенств.
Полученные результаты сопровождаются примерами моделирования, иллюстрирующими эффективность предложенного метода.
\end{abstract}

\textit{Ключевые слова:} динамическая система, замена координат, устойчивость, управление.

\section{Введение}

В \cite{Furtat21} приведен обзор методов управления с обеспечением выходного сигнала объекта в заданном множестве.
Также в \cite{Furtat21} предложено новое решение данной задачи, которое состоит в использовании специальной замены координат, позволяющей перейти от задачи с ограничениями к задаче без ограничений.

Однако и решение \cite{Furtat21} и большая часть описанной литературы в \cite{Furtat21} ограниченны рассмотрением объектов, у которых: 
\begin{enumerate}
\item [1)] размеренность сигнала управления больше или равна размерности регулируемого сигнала;
\item [2)] не накладываются ограничения на сигнал управления.
\end{enumerate}

Касательно первого ограничения на практике существует достаточно много задач, когда размерность сигнала управления меньше размерности регулируемого сигнала. 
Например, управление неполноприводными системами: шагающие роботы, станки с программным управлением, летательные аппараты, водные суда, некторые маятниковые системы и т.д. 
Для решения таких задач предложено много методов \cite{Miroshnik00,Spong01,Olfati01,Ortega02,Grishin02,Andrievski04,Astrom07,Liu13,Liu20,Sun20,Saleem21}
, однако они не гарантируют заданного качетсва регулирования в любой момент времени, а лишь в установившемся режиме. 

Относительно второго ограничения логически возникает вопрос о величине сигнала управления необходимого для обеспечения выходного сигнала в заданном множестве в любой момент времени.
 
В данной работе подход \cite{Furtat21} будет развит для решения следующих задач: 
\begin{enumerate}
\item [1)] управление объектами, у котрых размерность сигнала управления может быть меньше размерности регулируемого сигнала;
\item [2)] обеспечение регулируемого и управляющего сигналов в заданном множестве в любой момент времени;
\item [3)] использование аппарата линейных матричных неравенств для анализа устойчивости замкнутой системы и синтеза параметров регулятора.
\end{enumerate}

Статья организована следующим образом. 
В разделе \ref{Sec2} ставится общая задача управления с гарантией нахождения входного и выходного сигналов в заданном множестве в любой момент времени. 
В разделе \ref{Sec3} предложены две замены координат. 
Первая замена сводит выходную переменную объекта к новой переменной, размерность которой не превосходит размерности вектора управления. 
Вторая замена позволяет перейти от задачи управления с ограничениями к задаче управления без ограничений. 
В разделе \ref{Sec4}  рассмотрено применение результата из раздела \ref{Sec3} к задаче управления по состоянию линейными системами с учетом ограничений на сигнал управления и фазовые переменные. 
В разделе \ref{Sec5}  решена задача управления по выходу линейными системами с ограничением на выходной и управляющий сигналы. 
Полученные результаты сопровождаются примерами моделирования, иллюстрирующими эффективность предложенного метода.

В статье используются следующие обозначения:
$\mathbb R^{n}$ -- евклидово пространство размерности $n$ с нормой $|\cdot|$; 
$\mathbb R^{n \times m}$ -- множество всех $n \times m$ вещественных матриц; 
$P>0$ -- положительно определенная симметричная матрица.

\section{Постановка задачи}
\label{Sec2}
Рассмотрим динамическую систему вида
\begin{equation}
\label{eq2_1}
\begin{array}{l} 
\dot{x}=F(x,u,t),
\\
y=H(x,u,t),
\end{array}
\end{equation} 
где $t \geq 0$, $x \in \mathbb R^n$ -- вектор состояния, 
$u \in \mathcal U \subset \mathbb R^m$ -- сигнал управления, 
$y=col\{y_1,...,y_l\} \in \mathcal Y \subset \mathbb R^{v}$ -- выходной сигнал, 
функции $F$ и $H$ определены для всех $x$, $u$ и $t$, 
$F$ кусочно-непрерывна и ограничена по $t$, 
функция $H$ непрерывно дифференцируемая по всем аргументам и ограниченная по $t$. Объект управления \eqref{eq2_1} стабилизируемый и наблюдаемый для любого $x \in \mathbb R^n$. 

В отличие от \cite{Furtat21}, в настоящей статье не требуются ограничения вида $\dim u \geq \dim y$, а возможно произвольное соотношение между размерностями сигналов $u$ и $y$.
Для решения данной задачи введем новую переменную
\begin{equation}
\label{eq2_2}
\xi=G(y,u,t)
\end{equation}
размерность которой не превосходит размерности управляющего сигнала, т.е. $\xi = col\{\xi_1,...,\xi_v\}$, где $v \leq m$, 
$G$ -- непрерывно дифференцируемая функция по всем аргументам и ограниченная по $t$.  

Требуется разработать закон управления, который обеспечит нахождение нового выходного сигнала $\xi(t)$ в следующем множестве
\begin{equation}
\label{eq2_3}
\begin{array}{l} 
\mathcal{S}=\left\{\xi \in \mathbb R^v:~ \underline{g}_i(t)<\xi_i(t)<\overline{g}_i(t),~i=1,...,v \right\}.
\end{array}
\end{equation} 
Функция $G$ и непрерывно-дифференцируемые функции $\underline{g}_i(t)$ и $\overline{g}_i(t)$ выбираются так, чтобы $\mathcal S \subseteq \mathcal U$ для любых $y \in \mathcal Y$, $t \geq 0$ и $\mathcal S \subseteq \mathcal Y$ для любых $u \in \mathcal U$, $t \geq 0$. 
Т.о. ограничения \eqref{eq2_3} вместе с преобразованием \eqref{eq2_2} включают в себя ограничения на входные и выходные сигналы объекта \eqref{eq2_1}. 
Дополнительно, выбором функций $\underline{g}_i(t)$ и $\overline{g}_i(t)$ можно задавать различную <<конфигурацию>> множества, в котором осуществляется переходной процесс $\xi_i(t)$.


\section{Метод решения}
\label{Sec3}
Следуя \cite{Furtat21}, введем замену переменной $\xi$ в виде
\begin{equation}
\label{eq3_1}
\begin{array}{l} 
\xi(t)=\Phi(\varepsilon(t),t),
\end{array}
\end{equation} 
где $\varepsilon(t) \in \mathbb R^v$ -- непрерывно-дифференцируемая функция по $t$, 
 $\Phi(\varepsilon,t)=col\{\Phi_1(\varepsilon,t),...,\Phi_v(\varepsilon,t)\}$ удовлетворяет следующим условиям:
\begin{enumerate}

\item [(а)] $\underline{g}_i(t) < \Phi_i(\varepsilon,t) 
 < \overline{g}_i(t)$, $i=1,...,v$ для любых $t \geq 0$ и $\varepsilon \in \mathbb R^v$;

\item [(б)] существует обратное отображение $\varepsilon=\Phi^{-1}(\xi,t)$ для любых $\xi \in \mathcal{S}$ и $t \geq 0$;

\item [(в)] функция $\Phi(\varepsilon,t)$ непрерывно-дифференцируемая по $\varepsilon$ и $t$, а также $\det\left(\frac{\partial \Phi(\varepsilon,t)}{\partial \varepsilon}\right) \neq 0$ для любых $\varepsilon \in \mathbb R^v$ и $t \geq 0$;

\item [(г)] $\left| \frac{\partial \Phi(\varepsilon,t)}{\partial t} \right| \leq \gamma$, $\gamma>0$ для любых $\varepsilon \in \mathbb R^v$ и $t \geq 0$.
\end{enumerate}

Для синтеза закона управления потребуется знание динамики переменной $\varepsilon(t)$. 
Для этого найдем полные производные по времени от $y(t)$ и $\xi(t)$ вдоль траекторий \eqref{eq2_1}, \eqref{eq2_2} и \eqref{eq3_1}:
\begin{equation}
\label{eq3_2}
\begin{array}{l}
\dot{y}=\frac{\partial H}{\partial x}F
+\frac{\partial H}{\partial u}\dot{u}
+\frac{\partial H}{\partial t},
\\
\dot{\xi}=\frac{\partial\Phi(\varepsilon,t)}{\partial \varepsilon}\dot{\varepsilon}
+\frac{\partial\Phi(\varepsilon,t)}{\partial t},
\\
\dot{\xi}=\frac{\partial G}{\partial y}\dot{y}
+\frac{\partial G}{\partial u}\dot{u}
+\frac{\partial G}{\partial t}.
\end{array}
\end{equation}
Принимая во внимание условие (в), 
 выразим $\dot{\varepsilon}$ из \eqref{eq3_2} в виде 
\begin{equation}
\label{eq3_3}
\begin{array}{l} 
\dot{\varepsilon}=\left(\frac{\partial\Phi(\varepsilon,t)}{\partial \varepsilon}\right)^{-1}
\Big[
\frac{\partial G}{\partial y} \frac{\partial H}{\partial x}F
+
\Big(
\frac{\partial G}{\partial y}\frac{\partial H}{\partial u}+\frac{\partial G}{\partial u}
\Big) \dot{u}
+\frac{\partial G}{\partial y}\frac{\partial H}{\partial t}
+\frac{\partial G}{\partial t}
-\frac{\partial\Phi(\varepsilon,t)}{\partial t}
\Big].
\end{array}
\end{equation}

\begin{remark}
Если модель объекта задана в форме передаточной функции, то для синтеза закона управления вместо \eqref{eq3_3} удобнее использовать выражение, полученное из последних двух уравнений в \eqref{eq3_2} в виде
\begin{equation}
\label{eq3_03}
\begin{array}{l} 
\dot{\varepsilon}=\left(\frac{\partial\Phi(\varepsilon,t)}{\partial \varepsilon}\right)^{-1}
\Big[
\frac{\partial G}{\partial y} \dot{y}
+
\frac{\partial G}{\partial u} \dot{u}
+\frac{\partial G}{\partial t}
-\frac{\partial\Phi(\varepsilon,t)}{\partial t}
\Big].
\end{array}
\end{equation}
\end{remark}

В следующих разделах выражения \eqref{eq3_3} и \eqref{eq3_03} будут использоваться для синтеза законов управления. 
Теперь сформулируем основной результат настоящего раздела.

\begin{theorem}
\label{Th3_1}
Пусть для преобразования \eqref{eq3_1} выполнены условия (а)-(г). 
Если выбранный закон управления $u \in \mathcal U$ обеспечивает ограниченность решений \eqref{eq3_3}, 
то будет выполнено целевое условие \eqref{eq2_3}. 
\end{theorem}

Доказателсьтво теоремы \ref{Th3_1} следует непосредственно из доказательства теоремы 1 в \cite{Furtat21}.
В результате теорема \ref{Th3_1} позволяет перейти от задачи управления \eqref{eq2_1} с ограничениями \eqref{eq2_2} к задаче управления \eqref{eq2_3} без ограничений.

В \cite{Furtat21} предложены различные частные виды замены координат \eqref{eq3_1}, не связанные друг с другом. 
Ниже приведем новый вид замены координат, которая позволит связать преобразования из \cite{Furtat21} и получить ряд новых видов замен.

\sl{П\,р\,и\,м\,е\,р\,\,1}.
\rm
Пусть $\Phi(\varepsilon,t) \in \mathbb R$ в \eqref{eq3_1} задана в виде
\begin{equation}
\label{eqC3_2_0004}
\begin{array}{l}
\Phi(\varepsilon,t)=\frac{\overline{g}(t)-\underline{g}(t)}{2}T(\varepsilon)+\frac{\underline{g}(t)+\overline{g}(t)}{2}, 
\end{array}
\end{equation}
где $\varepsilon \in \mathbb R$,
$T(\varepsilon)$ -- строго монотонная функция такая, что 
$-1 < T(\varepsilon) < 1$ для любых $\varepsilon$. 
 
Достоинство замены координат \eqref{eqC3_2_0004}, по сравнению с \cite{Furtat21}, состоит в том, что она позволяет отдельно выделить 
функции $\overline{g}(t)$, $\underline{g}(t)$ и $T(\varepsilon)$.  
Функции  $\overline{g}(t)$ и $\underline{g}(t)$ определяют желаемое множество для регулируемой переменной и задаются разработчиком.
Функция $T(\varepsilon)$ определяет замену координат. 
Например, $T(\varepsilon)$ может быть выбрана в виде
$T(\varepsilon)=\frac{\varepsilon}{1+|\varepsilon|}$, $T(\varepsilon)=\frac{e^\varepsilon-1}{e^\varepsilon+1}=th(0,5\varepsilon)$,
 $T(\varepsilon)=\frac{2}{\pi} arctg(\varepsilon)$ и т.п.
 
Ниже рассмотрим применение предложенного подхода для тех же типов моделей, что и в \cite{Furtat21}. 
Обобщение полученных результатов на случай неизвестных параметров модели и дополнительных нелинейных слагаемых в ней может быть непосредственно получена, как в \cite{Furtat21_IJC}.


\section{Управление по состоянию с ограничением на фазовые переменные и сигнал управления}
\label{Sec4}

Рассмотрим объект управления вида
\begin{equation}
\label{eq4_1}
\begin{array}{l}
\dot{x}=Ax+Bu+Df,
\end{array}
\end{equation}
где $x \in \mathcal X \subset \mathbb R^n$, $u \in \mathcal U \subset \mathbb R$, $f \in \mathbb R$ и $|f(t)| \leq \bar{f}$ для всех $t$, матрицы $A$, $B$ и $D$ известны и имеют соответствующие размерности. 
Множества $\mathcal X$ и $\mathcal U$ заданы как
\begin{equation}
\label{eq_XU_4}
\begin{array}{l}
\mathcal X = \left\{x \in \mathbb R^n:~ x^{\rm T}P_1x \leq l_x(t) \right\},
~~~
\mathcal U = \left\{u \in \mathbb R:~ p_0 u^2 \leq l_u(t) \right\},
\end{array}
\end{equation}
где $P_1>0$, $p_0>0$, $l_x(t) > 0$ и $l_u(t)>0$ задаются разработчиком.

Представим сигнал управления в виде следующей суммы:
\begin{equation}
\label{eq_C_L}
u=u_1+u_2,
\end{equation}
где $u_1$ будет использоваться для стабилизации \eqref{eq4_1}, $u_2$ -- для обеспечения заданных ограничений на $x$ и $u$.

С учетом неравенством Юнга 
($2a^{\rm T}b \leq \mu a^{\rm T}a + \mu^{-1}b^{\rm T}b$ для любых $a,b \in \mathbb R^n$ и $\mu>0$), имеем $u^2= u_1^2+2u_1u_2+u_2^2 \leq (1+r)u_1^2+(1+r^{-1})u_2^2$, $r>0$. 
Обозначив $p_2=p_0(1+r)$ и $p_3=p_0(1+r^{-1})$, определим переменную $\xi$ в \eqref{eq2_2} в виде
\begin{equation}
\label{eq4_2}
\xi=x^{\rm T} P_1 x + p_2 u_1^2 + p_{3}(|u_2|+\delta)^2,
\end{equation}
где величина $\delta>0$ задается разработчиком и потребуется для реализации закона управления $u_2$ с учетом того, что $|u_2|+\delta \neq 0$. 

Так как $\xi$ определена скалярной величиной, то перепишем целевое условие \eqref{eq2_3} как
\begin{equation}
\label{eq3_3_G}
\begin{array}{l} 
\mathcal{S}=\left\{\xi \in \mathbb R:~ \underline{g}(t)<\xi(t)<\overline{g}(t) \right\}.
\end{array}
\end{equation} 
Выбрав $\overline{g}(t)$ в виде $\overline{g}(t) \leq \inf\{l_x(t),l_u(t)\}$ получим, что множество $\mathcal{S}$ включает в себя ограничения \eqref{eq_XU_4}.

Цель управления состоит в выполнении целевого условия \eqref{eq3_3_G}. 
Таким образом фазовые переменные и сигнал управления будут находиться в заданных множествах $\mathcal X$ и $\mathcal U$ с дополнительным выполнением \eqref{eq3_3_G}. 

Отметим, что метод \cite{Furtat21} для объекта \eqref{eq4_1} позволяет решить только задачу нахождения одной  компоненты вектора $x$ в заданном множестве без ограничений на сигнал управления.

Сформулируем основной результат раздела \ref{Sec4}.

\begin{theorem}
\label{Th02}
Пусть для преобразования \eqref{eq2_2} выполнены условия (а)-(г), $\frac{\partial\Phi(\varepsilon,t)}{\partial \varepsilon}>0$  для любых $\varepsilon$ и $t$. 
 Если для заданных $\delta>0$, $\mu>0$, $P_1>0$, $p_2>0$, $p_3>0$, $\beta>0$, $c>0$ и $K \in \mathbb R^{1 \times n}$ существуют $H>0$, $\alpha>0$, $\tau_i>0$, $i=1,...,5$ такие, что при $v=\pm \bar{f}$ разрешимы следующие линейные неравенства:
\begin{equation}
\label{eq_LMI01}
\begin{array}{l}
\begin{bmatrix}
-\alpha + 0.5 \tau_1  & 0.5 v \mu^{-1} D^{\rm T} D & -0.5 \\
* & -\tau_2 & 0 \\
* & * & -\tau_3
\end{bmatrix} \leq 0,
\\
c \tau_1 \geq \bar{f}^2 \tau_2 + \gamma^2 \tau_3,
\end{array}
\end{equation}
\begin{equation}
\label{eq_LMI02}
\begin{array}{l}
\begin{bmatrix}
\bar{A}^{\rm T}H+H\bar{A}+\beta H  & H B & H D \\
* & -\tau_4 & 0 \\
* & * & -\tau_5
\end{bmatrix} \leq 0,
\\
H \geq \bar{P}_1,
\\
\frac{\inf\{\bar{g}(t)\}}{\lambda_{\min}\{\bar{P}_1\}} \beta \geq \frac{\inf\{\bar{g}(t)\}}{p_3} \tau_4 + \bar{f}^2 \tau_5,
\end{array}
\end{equation}
то закон управления \eqref{eq_C_L} обеспечит выполнение целевого условия \eqref{eq3_3_G}, где 
\begin{equation}
\label{eq4_3}
\begin{array}{l}
u_1=Kx,
\\
\dot{u}_2=-\frac{2}{p_3(|u_2|+\delta)} \textup{sign} (u_2) 
\left[
\alpha \varepsilon  
+2x^{\rm T} \bar{P}_1 \bar{A}x
+2 x^{\rm T} \bar{P}_1 B u_2
+\mu \textup{sign} (\varepsilon) x^{\rm T} \bar{P}_1^2 x
\right],
\end{array}
\end{equation} $\bar{A}=A+BK$ и 
$\bar{P}_1=P_1 + p_2 K^{\rm T}K$.
\end{theorem}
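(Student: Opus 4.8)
The plan is to place the closed loop \eqref{eq4_1}, \eqref{eq_C_L}, \eqref{eq4_3} into the framework of Theorem~\ref{Th3_1}: I would show that it has a (Filippov) solution for all $t\ge 0$, that all its signals are bounded, that $u(t)\in\mathcal U$, and that the $\varepsilon$-equation coming from \eqref{eq3_2} is bounded, after which Theorem~\ref{Th3_1} delivers \eqref{eq3_3_G}. A preliminary remark disposes of $x\in\mathcal X$ and $u\in\mathcal U$ for free: since $u_1=Kx$ gives $x^{\rm T}P_1x+p_2u_1^2=x^{\rm T}\bar P_1x$, relation \eqref{eq4_2} reads $\xi=x^{\rm T}\bar P_1x+p_3(|u_2|+\delta)^2$, so whenever $\xi\in\mathcal S$ and $\overline g(t)\le\inf\{l_x(t),l_u(t)\}$ one has $x^{\rm T}P_1x\le x^{\rm T}\bar P_1x\le\xi<l_x(t)$ and $p_0u^2\le p_2u_1^2+p_3u_2^2\le\xi<l_u(t)$, i.e.\ $x\in\mathcal X$, $u\in\mathcal U$. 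Differentiating \eqref{eq4_2} along the closed loop, with $\dot x=\bar Ax+Bu_2+Df$, $\tfrac{d}{dt}(x^{\rm T}P_1x+p_2u_1^2)=2x^{\rm T}\bar P_1\dot x$ and $\tfrac{d}{dt}(|u_2|+\delta)^2=2(|u_2|+\delta)\textup{sign}(u_2)\dot u_2$ (legitimate since $|u_2|+\delta>0$), the control law \eqref{eq4_3} is designed so that the $x^{\rm T}\bar P_1\bar Ax$ and $x^{\rm T}\bar P_1Bu_2$ terms cancel, leaving
\[
\dot\xi=-\alpha\varepsilon+2x^{\rm T}\bar P_1Df-\mu\,\textup{sign}(\varepsilon)\,x^{\rm T}\bar P_1^2x ,
\]
while from the third line of \eqref{eq3_2}, $\dot\varepsilon=(\partial\Phi/\partial\varepsilon)^{-1}(\dot\xi-\partial\Phi/\partial t)$ with $\partial\Phi/\partial\varepsilon>0$ by hypothesis.

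For the first inequality I would take $V_1=\tfrac12\varepsilon^2$, so that
\[
\dot V_1=\Big(\tfrac{\partial\Phi}{\partial\varepsilon}\Big)^{-1}\!\Big[-\alpha\varepsilon^2+2\varepsilon\,x^{\rm T}\bar P_1Df-\mu|\varepsilon|\,x^{\rm T}\bar P_1^2x-\varepsilon\,\tfrac{\partial\Phi}{\partial t}\Big].
\]
Applying $2a^{\rm T}b\le\mu a^{\rm T}a+\mu^{-1}b^{\rm T}b$ with $a=\textup{sign}(\varepsilon)\bar P_1x$ and $b=Df$ absorbs the sign-indefinite term $2\varepsilon x^{\rm T}\bar P_1Df$ into $\mu|\varepsilon|x^{\rm T}\bar P_1^2x+\mu^{-1}|\varepsilon|D^{\rm T}Df^{2}$, so the $x$-dependence drops out; bounding $|\partial\Phi/\partial t|\le\gamma$ and treating $f$ and $\partial\Phi/\partial t$ as exogenous signals with $f^2\le\bar f^2$ and $(\partial\Phi/\partial t)^2\le\gamma^2$, an S-procedure with nonnegative multipliers $\tau_1,\tau_2,\tau_3$ for these bounds and for the constraint $\varepsilon^2\ge 2c$ turns ``$\dot V_1\le 0$ on $\{V_1\ge c\}$'' into the matrix inequality together with $c\tau_1\ge\bar f^2\tau_2+\gamma^2\tau_3$ of \eqref{eq_LMI01}, the two evaluations $v=\pm\bar f$ being the vertices of the interval $|f|\le\bar f$, across which the inequality is affine. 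Consequently $\{|\varepsilon|\le\max(|\varepsilon(0)|,\sqrt{2c})\}$ is forward invariant, $\varepsilon(\cdot)$ is bounded, and by the funnel property $\underline g(t)<\Phi(\varepsilon,t)<\overline g(t)$ we get $\xi(t)\in\mathcal S$ on the maximal interval of existence.

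For the second inequality I would take $V_2=x^{\rm T}Hx$ and, along $\dot x=\bar Ax+Bu_2+Df$, bound the cross terms $2x^{\rm T}HBu_2$ and $2x^{\rm T}HDf$ by Young's inequality with weights $\tau_4$ and $\tau_5$; the Schur complement of the first inequality of \eqref{eq_LMI02} then gives $\dot V_2\le-\beta V_2+\tau_4u_2^2+\tau_5\bar f^2$. Since the previous step yields $\xi\in\mathcal S$, hence $p_3(|u_2|+\delta)^2\le\xi<\overline g(t)$ and $x^{\rm T}\bar P_1x\le\xi<\overline g(t)$, the signals $u_2$, $x$, $u_1=Kx$ and $\dot u_2$ stay bounded on $[0,T_{\max})$, and the usual continuation argument then gives $T_{\max}=\infty$. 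The conditions $H\ge\bar P_1$ and the last (scalar) inequality of \eqref{eq_LMI02} are precisely what keeps the resulting bound on $V_2$, hence on $x^{\rm T}P_1x$, underneath the funnel $\overline g(t)$, so that $x(t)\in\mathcal X$ for all $t$. With all signals bounded, $u\in\mathcal U$ and $\xi\in\mathcal S$, Theorem~\ref{Th3_1} gives \eqref{eq3_3_G}.

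The main difficulty I expect is in the second paragraph: arranging the S-procedure so that it collapses \emph{exactly} onto \eqref{eq_LMI01} (the precise placement of $\mu^{-1}D^{\rm T}D$, of the level $c$, and of $\bar f$, $\gamma$), and, together with the third paragraph, untangling the circular dependence --- boundedness of $\varepsilon$ presupposes existence of a solution, whose existence rests on boundedness of $x$, $u_2$, which in turn follows from $\xi\in\mathcal S$ --- which is why the argument must be run on the maximal interval of existence. A minor technical point is that $\textup{sign}(\varepsilon)$ and $\textup{sign}(u_2)$ make the right-hand side discontinuous, so solutions are understood in the Filippov sense; this is harmless since $|u_2|+\delta$ is bounded away from zero.
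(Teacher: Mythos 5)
Your proposal is correct and follows essentially the same route as the paper: the same two Lyapunov functions $V_1=\tfrac12\varepsilon^2$ and $V_2=x^{\rm T}Hx$, the same absorption of the disturbance term via $2a^{\rm T}b\le\mu a^{\rm T}a+\mu^{-1}b^{\rm T}b$, and the same S-procedure with multipliers $\tau_1,\dots,\tau_5$ and vertices $v=\pm\bar f$ yielding \eqref{eq_LMI01}--\eqref{eq_LMI02}. The one difference is interpretive rather than technical: the paper gets boundedness of $x$, $u_1$, $u_2$ directly from $\xi=x^{\rm T}\bar P_1x+p_3(|u_2|+\delta)^2<\overline g(t)$ once Theorem~\ref{Th3_1} applies, and uses \eqref{eq_LMI02} only as an additional transient-quality condition on $x$ inside the funnel, not as a step needed for the continuation argument.
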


\begin{proof}
Принимая во внимание \eqref{eq_C_L} и \eqref{eq4_3}, преобразуем \eqref{eq2_1} и \eqref{eq4_2} к виду
\begin{equation}
\label{eq4_002}
\begin{array}{l}
\dot{x}=\bar{A} x + Bu_2 + Df,
\\
\xi=x^{\rm T} \bar{P}_1 x + p_{3}(|u_2|+\delta)^2.
\end{array}
\end{equation}
С учетом \eqref{eq4_002} перепишем \eqref{eq3_3} как
\begin{equation}
\label{eq4_02}
\begin{array}{l}
\dot{\varepsilon}=\left(\frac{\partial\Phi(\varepsilon,t)}{\partial \varepsilon}\right)^{-1}
\Big[
2x^{\rm T}\bar{P}_1 \bar{A}x
+2 x^{\rm T} \bar{P}_1 B u
+2x^{\rm T}\bar{P}_1Df +
\\
~~~~~~
+2 p_{3}(|u_2|+\delta) \textup{sign}(u_2)  \dot{u}_2
-\frac{\partial\Phi(\varepsilon,t)}{\partial t}
\Big].
\end{array}
\end{equation}

Для анализа устойчивости решений \eqref{eq4_02} рассмотрим функцию Ляпунова вида
\begin{equation}
\label{eq4_6}
\begin{array}{l}
V_1=0.5 \varepsilon^{2}.
\end{array}
\end{equation}

Найдем полную производную по времени от \eqref{eq4_6} вдоль решений \eqref{eq4_02} и затем воспользуемся неравенством Юнга. 
В результате получим:
\begin{equation}
\label{eq4_6_dot_V}
\begin{array}{lll}
\dot{V}_1
= &
\left(\frac{\partial\Phi(\varepsilon,t)}{\partial \varepsilon}\right)^{-1}
\varepsilon
\Big[
2x^{\rm T}\bar{P}_1 \bar{A} x +2 x^{\rm T} \bar{P}_1 B u
+2x^{\rm T}Df +
\\
&
+2 p_{3}(|u_2|+\delta) \textup{sign}(u_2)  \dot{u}_2
-\frac{\partial\Phi(\varepsilon,t)}{\partial t}
\Big] \leq
\\
 & \leq
\left(\frac{\partial\Phi(\varepsilon,t)}{\partial \varepsilon}\right)^{-1}
\varepsilon
\Big[
2x^{\rm T}\bar{P}_1 \bar{A}x +2 x^{\rm T} \bar{P}_1 B u
+\mu \textup{sign}(\varepsilon) x^{\rm T} \bar{P}_1^2 x  +
\\
&
+2 p_{3}(|u_2|+\delta) \textup{sign}(u_2)  \dot{u}_2
+ \mu^{-1} \textup{sign}(\varepsilon) D^{\rm T} \bar{P}_1^2 D f^2 -\frac{\partial\Phi(\varepsilon,t)}{\partial t}
\Big].
\end{array}
\end{equation}

С учетом второго выражения в \eqref{eq4_3}, перепишем \eqref{eq4_6_dot_V} как

\begin{equation}
\label{eq4_6_dot_V1}
\begin{array}{l}
\dot{V}_1=
\left(
\frac{\partial\Phi(\varepsilon,t)}{\partial \varepsilon}\right)^{-1}
\left( - \alpha \varepsilon^2 
+ \mu^{-1} \textup{sign}(\varepsilon) \varepsilon D^{\rm T} D f^2 
-\varepsilon \frac{\partial\Phi(\varepsilon,t)}{\partial t} 
\right).
\end{array}
\end{equation}

Потребуем при $V_1 \geq c$ выполнение условия $\dot{V}_1 \leq 0$, принимая во внимание ограничения $f^2 \leq \bar{f}^2$ и $\left( \frac{\partial\Phi(\varepsilon,t)}{\partial t} \right)^2 \leq \gamma^2$ (см. постановку задачи и условие (г)).
 Так как $\frac{\partial\Phi(\varepsilon,t)}{\partial \varepsilon} > 0$ не влияет на знак выражения \eqref{eq4_6_dot_V1}, то перепишем вышеназванные условия в виде
\begin{equation}
\label{eq4_6_dot_V2}
\begin{array}{l}
- \alpha \varepsilon^2 
+ \mu^{-1} \varepsilon \textup{sign}(\varepsilon) D^{\rm T} D f^2
-\varepsilon \frac{\partial\Phi(\varepsilon,t)}{\partial t} 
\leq 0
~~~ \forall \left(\varepsilon, f, \frac{\partial\Phi(\varepsilon,t)}{\partial t} \right):
\\
0.5 \varepsilon^2 \geq c,
~~~f^2 \leq \bar{f}^2,
~~~\left( \frac{\partial\Phi(\varepsilon,t)}{\partial t} \right)^2 \leq \gamma^2.
\end{array}
\end{equation}

Обозначив $z=col \left \{\varepsilon,f,\frac{\partial\Phi(\varepsilon,t)}{\partial t} \right \}$, 
перепишем \eqref{eq4_6_dot_V2} в матричном виде:
\begin{equation}
\label{eq4_6_dot_V3}
\begin{array}{l}
z^{\rm T}
\begin{bmatrix}
-\alpha  & 0.5 \mu^{-1} \textup{sign}(\varepsilon) f D^{\rm T} D & -0.5 \\
* & 0 & 0 \\
* & * & 0
\end{bmatrix}
z \leq 0,
\\
z^{\rm T}
\begin{bmatrix}
-0.5  & 0 & 0 \\
* & 0 & 0 \\
* & * & 0
\end{bmatrix}
z \leq -c,
~~~
z^{\rm T}
\begin{bmatrix}
0  & 0 & 0 \\
* & 1 & 0 \\
* & * & 0
\end{bmatrix}
z \leq \bar{f}^2,
~~~

z^{\rm T}
\begin{bmatrix}
0  & 0 & 0 \\
* & 0 & 0 \\
* & * & 1
\end{bmatrix}
z \leq \gamma^2.
\end{array}
\end{equation}
Используя S-процедуру и условия разрешимости линейных матричных неравенств с политопной неопределенностью  $\textup{sign}(\varepsilon) f \in [-\bar{f}, \bar{f}]$ \cite{Fridman10,Polyak14}, выражения \eqref{eq4_6_dot_V3} будут выполнены одновременно, если будут выполнены условия \eqref{eq_LMI01}. 
Тогда $\varepsilon(t)$ будет ограниченной функцией для любых $t$. 
Из теоремы \ref{Th3_1} следует, что целевое условие \eqref{eq3_3_G} будет выполнено. 
Значит, сигналы $x$, $u_1$, $u_2$ и $u$ будут ограничены. 

Однако условие \eqref{eq3_3_G} может быть выполнено при $u$ стремящимся в пределе к нулю и $x$ стремящимся в пределе к границе эллипсоида $x^{\rm T} \bar{P}_1x=\bar{g}(t)$. 
Несмотря на то, что это удовлетворяет поставленной цели \eqref{eq3_3_G}, на практике такая ситуация может потребовать больших вычислительных затрат. 
Для избежания данной ситуации найдем дополнительные условия с использованием функции Ляпунова вида
\begin{equation}
\label{eq4_V2_1}
\begin{array}{l}
V_2=x^{\rm T} H x,
\end{array}
\end{equation}
где потребуем при $V_2 \geq \frac{\inf{\bar{g}(t)}}{\lambda_{\min}\{\bar{P}_1\}}$ выполнение условий $\dot{V} \leq 0$ и $H \geq \bar{P}_1$ (последнее означает, что эллипсоид $x^{\rm T} Q x =1$ содержится внутри эллипсоида $x^{\rm T} \bar{P}_1x =1$), принимая во внимание ограничения $u_2^2 \leq \frac{\inf{\bar{g}(t)}}{p_3}$ и $f^2 \leq \bar{f}^2$. 
Т.е. потребуем, чтобы закон управления $u_1$ гарантировал нахождение фазовых траекторий в меньшем множестве по сравнению с \eqref{eq3_3_G}.
 Перепишем данные условия как
\begin{equation}
\label{eq4_V2_dot_1}
\begin{array}{l}
\dot{V}_2=
x^{\rm T}(\bar{A}^{\rm T} H+H\bar{A})x+2x^{\rm T} H B u_2+2x^{\rm T}H Df \leq 0
~~~ \forall (x, u_2, f):
\\
x^{\rm T}Hx \geq \frac{\inf{\bar{g}(t)}}{\lambda_{\min}\{\bar{P}_1\}},
~~~u_2^2 \leq \frac{\inf{\bar{g}(t)}}{p_3},
~~~f^2 \leq \bar{f}^2.
\end{array}
\end{equation}
Обозначив $s=col\{x,u_2,f\}$, преобразуем \eqref{eq4_V2_dot_1} к виду
\begin{equation}
\label{eq4_V2_dot_2}
\begin{array}{l}
s^{\rm T}
\begin{bmatrix}
\bar{A}^{\rm T}H+H\bar{A}  & H B & H D \\
* & 0 & 0 \\
* & * & 0
\end{bmatrix}s \leq 0,
\\
- s^{\rm T}
\begin{bmatrix}
H  & 0 & 0 \\
* & 0 & 0 \\
* & * & 0
\end{bmatrix}
s \geq -\frac{\inf{\bar{g}(t)}}{\lambda_{\min}\{\bar{P}_1\}},
\\
s^{\rm T}
\begin{bmatrix}
0  & 0 & 0 \\
* & -1 & 0 \\
* & * & 0
\end{bmatrix}
s \leq \frac{\inf\{\bar{g}(t)\}}{p_3},
~~
s^{\rm T}
\begin{bmatrix}
0  & 0 & 0 \\
* & 0 & 0 \\
* & * & -1
\end{bmatrix}
s \leq \bar{f}.
\end{array}
\end{equation}
Используя S-процедуру \cite{Fridman10}, неравенства \eqref{eq4_V2_dot_2} будут выполнены одновременно, если будут выполнены условия \eqref{eq_LMI02}.
Теорема \ref{Th02} доказана.
\end{proof}



\sl{П\,р\,и\,м\,е\,р\,\,2}.
\rm
Рассмотрим неустойчивый объект \eqref{eq4_1} со следующими параметрами:
\begin{equation}
\label{eq4_9}
\begin{array}{l}
A=
\begin{bmatrix}
0 & 1 \\
1 & 2
\end{bmatrix},
~~~
B=
\begin{bmatrix}
0  \\
1
\end{bmatrix},
~~~
D=
\begin{bmatrix}
0.1 \\
1
\end{bmatrix},
\\
x(0)=
\begin{bmatrix}
-1  \\
1
\end{bmatrix},
~~~
f(t)=
0.1[\textup{sign}(\sin(1.7t))+0.2\sin(0.3t)+\textup{sat}\{d(t)\}],

\end{array}
\end{equation}
где $\textup{sat}\{\cdot\}$ -- функция насыщения, $d(t)$ -- сигнал, имитирующий белый шум с ограниченной полосой пропускания и моделируемый в Matlab Simulink с помощью блока <<Band-Limited White Noise>> со следующими параметрами: мощность шума $0.3$ и время выборки $0.2$ соответственно. 
Тогда $\bar{f}=0.22$. 

Пусть в \eqref{eq_XU_4} определены следующие параметры: $P_1=
0.1\begin{bmatrix}
1 & 0 \\
0 & 1
\end{bmatrix}$, $p_0=0.1$, $l_x(t)=e^{-0.05t}+0.5$ и $l_u(t)=1$.

Зададим 
$\delta=0.01$ в \eqref{eq4_2}, 
 $K=[-2~ -4]$ и $\mu=0.01$ в \eqref{eq4_3}, 
 а также $T(\varepsilon)=\frac{e^\varepsilon-1}{e^\varepsilon+1}$, $\overline{g}=0.9 e^{-0.1t}+0.1$ и $\underline{g}=0.01$ в \eqref{eq3_3_G}. 
 С учетом $p_0=0.1$ имеем, что $p_{2}= 0.11$ и $p_3=1.1$.
 Вычислим $\gamma=\frac{3\overline{g}-\underline{g}}{2}=1.475$. Задав $c=1$ и $\beta=0.1$, неравенства \eqref{eq_LMI01}, \eqref{eq_LMI02} имеют решения, например, при $\alpha=11.6$.
 
На рис.~\ref{Fig_1_Ex2} и \ref{Fig_Xi_Ex2} изображены фазовые портреты по $(x_1,x_2)$ и $(u_1,u_2)$, а также переходной процесс по $\xi(t)$. 
На рис.~\ref{Fig_1_Ex2} большие <<эллипсы>> соответствуют выражениям $x^{\rm T}P_1x = \overline{g}(0)$ и $p_2 u_1^2+p_3 (|u_2|+\delta)^2 = \overline{g}(0)$, а  меньшие <<эллипсы>> -- $x^{\rm T}\bar{P}_1x = \inf\{\overline{g}(t)\}$ и $p_2 u_1^2+p_3 (|u_2|+\delta)^2 = \inf\{\overline{g}(t)\}$.
Из рис.~\ref{Fig_1_Ex2} видно, что фазовые траектории начинаются в большом <<эллипсе>>, с течением времени (примерно через $60$ с после начало работы системы из рис.~\ref{Fig_Xi_Ex2}) достигают меньшего <<эллипса>> и остаются в нем никогда не покидая его.

\begin{figure}[h]
\begin{minipage}[h]{0.47\linewidth}
\center{\includegraphics[width=1\linewidth]{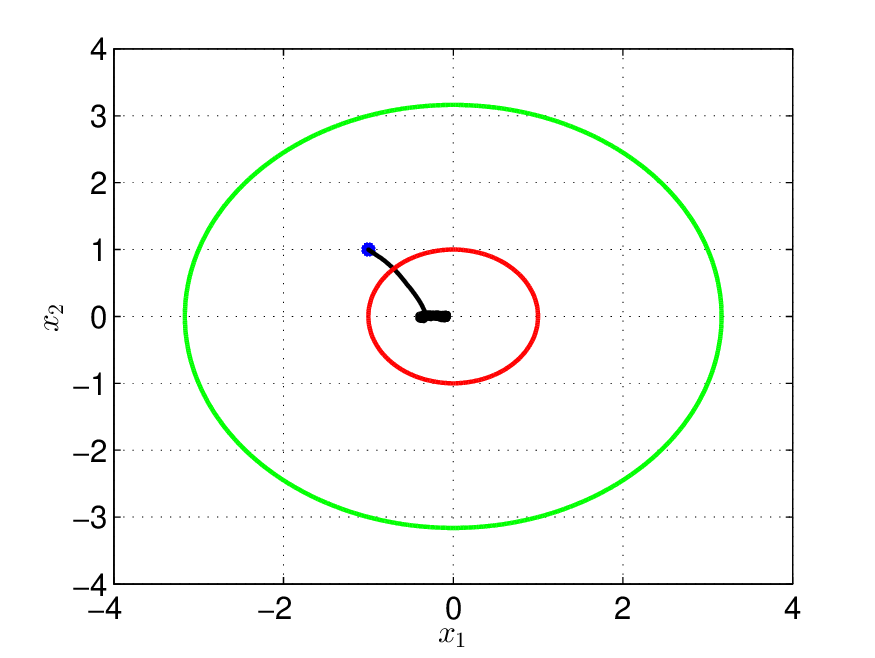}}
\end{minipage}
\hfill
\begin{minipage}[h]{0.47\linewidth}
\center{\includegraphics[width=1\linewidth]{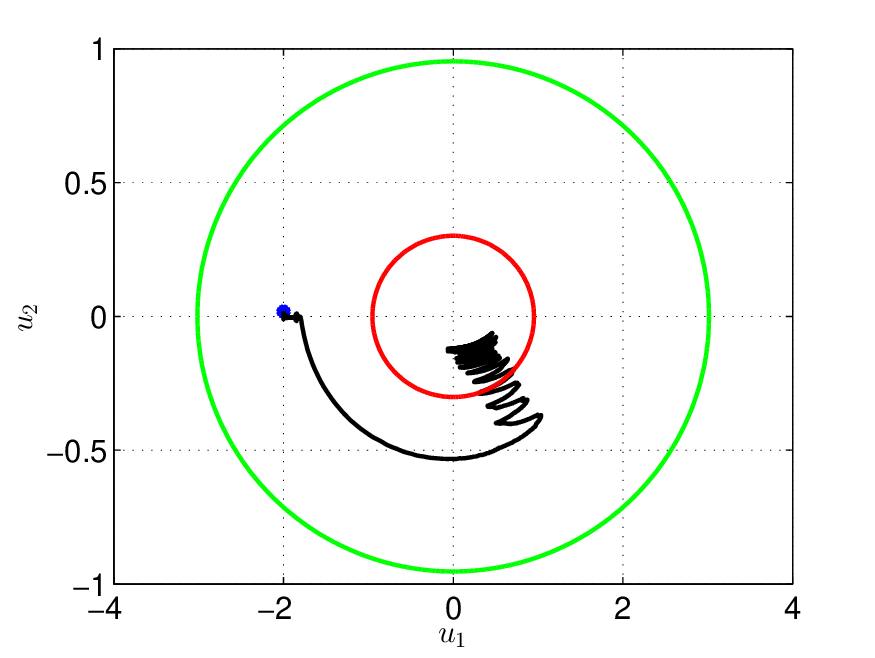}}
\end{minipage}
\caption{Фазовые траектории в замкнутой системе по $(x_1,x_2)$ и $(u_1,u_2)$.}
\label{Fig_1_Ex2}
\end{figure}

\begin{figure}[h!]
\center{\includegraphics[width=0.5\linewidth]{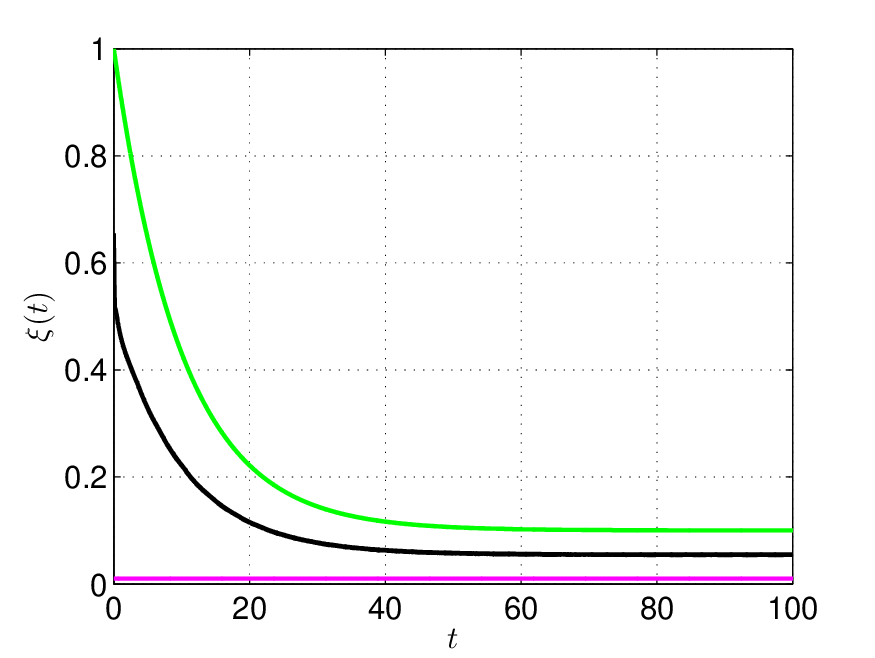}}
\caption{Переходные процессы по $\xi(t)$ в замкнутой системы.}
\label{Fig_Xi_Ex2}
\end{figure}


\section{Управление по выходу}
\label{Sec5}

Рассмотрим объект управления вида
\begin{equation}
\label{eq5_1}
\begin{array}{l}
\dot{x}=Ax+Bu+Df,
\\
y=Lx,
\end{array}
\end{equation}
где $x \in \mathbb R^n$, $u \in \mathcal U \subset \mathbb R$, $y \in \mathcal Y \subset \mathbb R$, $f \in \mathbb R$ и $|f(t)| \leq \bar{f}$ для всех $t$.
Матрицы $A$, $B$, $D$ и $L$ имеют соответствующие размерности. Объект управления \eqref{eq5_1} строго минимально-фазовый \cite{Miroshnik00}. 
Множества $\mathcal Y$ и $\mathcal U$ заданы в виде
\begin{equation}
\label{eq_XU_5}
\begin{array}{l}
\mathcal Y = \left\{x \in \mathbb R:~ p_1 y^2 \leq l_y(t) \right\},
~~~
\mathcal U = \left\{u \in \mathbb R:~ p_0 u^2 \leq l_u(t) \right\},
\end{array}
\end{equation}
где $p_1>0$, $p_0>0$, $l_y(t) > 0$ и $l_u(t)>0$ задаются разработчиком.

Представим сигнал управления в виде суммы \eqref{eq_C_L} и определим переменную $\xi$ в \eqref{eq2_2} как
\begin{equation}
\label{eq5_3}
\xi=p_1 y^2 + p_2 u_1^2 + p_{3}(|u_2|+\delta)^2,
\end{equation}
где $\delta$, $p_2$ и $p_3$ определены в разделе \ref{Sec4}. 
Выбрав $\overline{g}(t)=\inf\{l_y(t),l_u(t)\}$ получим, что множество $\mathcal{S}$ в \eqref{eq3_3_G} включает в себя ограничения \eqref{eq_XU_5}.
Цель управления состоит в выполнении целевого условия \eqref{eq3_3_G}. 

Метод \cite{Furtat21} для объекта \eqref{eq5_1} гарантирует нахождение только выходного сигнала в заданном множестве. 
В отличие от \cite{Furtat21}, предложенный метод позволит обеспечить нахождение выходного и управляющего сигналов в заданном множестве.

Преобразуем \eqref{eq5_1} к виду
\begin{equation}
\label{eq5_2}
\begin{array}{l}
Q(p)y(t)=R(p)u(t)+\phi(t).
\end{array}
\end{equation}
Здесь $Q(p)=\det(pI-A)$, 
$R(p)=L(pI-A)^*B$, 
$(pI-A)^*$ -- присоединенная матрица, 
$\phi(t)=L(pI-A)^*\big[x(0) + D f(t) + Bu(0) + B f(0) \big]$ и $p=d/dt$.

%
%
Сформулируем основной результат раздела \ref{Sec5}.

\begin{theorem}
\label{Th03}
Пусть для преобразования \eqref{eq2_2} выполнены условия (а)-(г), $\frac{\partial\Phi(\varepsilon,t)}{\partial \varepsilon}>0$  для любых $\varepsilon$ и $t$. 
 Если для заданных $\delta>0$, $\mu>0$, $p_1>0$, $p_2>0$, $p_3>0$, $\beta>0$, $c>0$ и $k \in \mathbb R$ существуют $H>0$, $\alpha>0$, $\tau_i>0$, $i=1,...,5$ такие, что при $v=\pm \bar{f}$ разрешимы следующие линейные неравенства:
\begin{equation}
\label{eq_LMI2}
\begin{array}{l}
\begin{bmatrix}
-\alpha + 0.5 \tau_1  & 0.5 v \mu^{-1} & -0.5 \\
* & -\tau_2 & 0 \\
* & * & -\tau_3
\end{bmatrix} \leq 0,
\\
c \tau_1 \geq \hat{\phi}^2 \tau_2 + \gamma^2 \tau_3,
\end{array}
\end{equation}
\begin{equation}
\label{eq_LMI3}
\begin{array}{l}
\begin{bmatrix}
\bar{A}^{\rm T}H+Q\bar{A}+\beta H  & H B & H D \\
* & -\tau_4 & 0 \\
* & * & -\tau_5
\end{bmatrix} \leq 0,
\\
H \geq \bar{p}_1 L^{\rm T}L,
\\
\frac{\inf\{\bar{g}(t)\}}{\bar{p}_1} \beta \geq \frac{\inf\{\bar{g}(t)\}}{p_3} \tau_4 + \hat{\phi}^2 \tau_5.
\end{array}
\end{equation}
Тогда закон управления \eqref{eq_C_L} обеспечивает выполнение целевого условия \eqref{eq3_3_G}, где
\begin{equation}
\label{eq5_5}
\begin{array}{l}
u_1=ky,
\\
\dot{u}_2 =- \frac{2}{p_3 (|u_2|+\delta)} \textup{sign}(u_2) 
\left[
\alpha \varepsilon 
+2 \bar{p}_1 y \frac{p R(p)}{\bar{Q}(p)}u_2 + 
\mu \bar{p}_1^2 \textup{sign}(\varepsilon) y^2
\right],
\end{array}
\end{equation}
$\bar{A}=A+kBL$, $\bar{p}_1=p_1+k^2 p_2$.
\end{theorem}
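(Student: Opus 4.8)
The plan is to carry out, in the input--output (operator) setting, the same two-stage argument as in the proof of Theorem~\ref{Th02}, with the quadratic form $x^{\rm T}\bar{P}_1 x$ replaced by $\bar{p}_1 y^{2}$ and the state equation replaced by \eqref{eq5_2}. First substitute $u=u_1+u_2$ with $u_1=ky$ into \eqref{eq5_2}: since $R(p)=L(pI-A)^{*}B$ and $Q(p)=\det(pI-A)$, the closed loop becomes $\bar{Q}(p)y=R(p)u_2+\phi$, where $\bar{Q}(p)=Q(p)-kR(p)$ is the characteristic polynomial of $\bar{A}=A+kBL$, and \eqref{eq5_3} becomes $\xi=\bar{p}_1 y^{2}+p_3(|u_2|+\delta)^{2}$ with $\bar{p}_1=p_1+k^{2}p_2$. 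Differentiating $\xi$, using $\dot{\xi}=(\partial\Phi/\partial\varepsilon)\dot\varepsilon+\partial\Phi/\partial t$ and the input--output form \eqref{eq3_03} of $\dot\varepsilon$, and writing $\dot{y}=\tfrac{pR(p)}{\bar{Q}(p)}u_2+\psi(t)$, where $\psi$ collects the part of $\dot{y}$ generated by $\phi$ and by the decaying initial data, one obtains $\dot\varepsilon=(\partial\Phi/\partial\varepsilon)^{-1}\bigl[2\bar{p}_1 y\,\tfrac{pR(p)}{\bar{Q}(p)}u_2+2\bar{p}_1 y\psi+2p_3(|u_2|+\delta)\,\textup{sign}(u_2)\dot{u}_2-\partial\Phi/\partial t\bigr]$ (the quantity $|u_2|+\delta>0$ makes $\dot{u}_2$ in \eqref{eq5_5} well defined).

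For the first stage take $V_1=0.5\varepsilon^{2}$ as in \eqref{eq4_6} and compute $\dot{V}_1=\varepsilon\dot\varepsilon$. Bounding the disturbance contribution by Young's inequality ($2\varepsilon\bar{p}_1 y\psi\le|\varepsilon|(\mu\bar{p}_1^{2}y^{2}+\mu^{-1}\psi^{2})$, with the factor $\textup{sign}(\varepsilon)$ carried through as in \eqref{eq4_6_dot_V}) and then substituting $\dot{u}_2$ from \eqref{eq5_5} --- whose right-hand side is chosen so that $2p_3(|u_2|+\delta)\textup{sign}(u_2)\dot{u}_2$ cancels $\alpha\varepsilon$, the term $2\bar{p}_1 y\,\tfrac{pR(p)}{\bar{Q}(p)}u_2$ and the term $\mu\bar{p}_1^{2}\textup{sign}(\varepsilon)y^{2}$ --- reduces $\dot{V}_1$ to $(\partial\Phi/\partial\varepsilon)^{-1}\bigl(-\alpha\varepsilon^{2}+\mu^{-1}\textup{sign}(\varepsilon)\varepsilon\psi^{2}-\varepsilon\,\partial\Phi/\partial t\bigr)$, which is nonpositive (since $\partial\Phi/\partial\varepsilon>0$) whenever the bracketed scalar is. Using the disturbance bound $\psi^{2}\le\hat\phi^{2}$ and $(\partial\Phi/\partial t)^{2}\le\gamma^{2}$ from the hypotheses on $\Phi$, I would state the requirement that the bracket be $\le0$ whenever $0.5\varepsilon^{2}\ge c$ as a quadratic-form feasibility problem in $z=col\{\varepsilon,\psi,\partial\Phi/\partial t\}$ with three quadratic constraints, mirroring \eqref{eq4_6_dot_V3}, and discharge it by the S-procedure evaluated at the two extreme values of the bounded uncertain term $\textup{sign}(\varepsilon)\psi$; this gives \eqref{eq_LMI2}. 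Hence $\varepsilon(t)$ stays bounded for all $t\ge0$, so by Theorem~\ref{Th3_1} the trajectory satisfies $\xi(t)\in\mathcal{S}$; in particular $y$, $u_1$, $u_2$, $u$ are bounded.

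For the second stage one still has to exclude the remaining way of leaving $\mathcal{S}$, namely $u\to0$ while $\bar{p}_1 y^{2}$ approaches $\bar{g}(t)$. Exactly as in the proof of Theorem~\ref{Th02}, I would use $V_2=x^{\rm T}Hx$: the condition $H\ge\bar{p}_1 L^{\rm T}L$ makes the sublevel set $\{x:x^{\rm T}Hx\le r\}$ contain $\{x:\bar{p}_1(Lx)^{2}\le r\}$, so it is enough to show $\dot{V}_2\le0$ whenever $x^{\rm T}Hx\ge\inf\{\bar{g}(t)\}/\bar{p}_1$, using $u_2^{2}\le\inf\{\bar{g}(t)\}/p_3$ (a consequence of $\xi\in\mathcal{S}$) and the disturbance bound $\hat\phi$. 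Writing $\dot{V}_2=x^{\rm T}(\bar{A}^{\rm T}H+H\bar{A})x+2x^{\rm T}HBu_2+2x^{\rm T}HD(\cdot)$, setting $s=col\{x,u_2,\cdot\}$, and applying the S-procedure (the term $\beta H$ coming from the robust-invariance step as in \eqref{eq4_V2_dot_2}) yields \eqref{eq_LMI3}, which finishes the argument.

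The step I expect to be the main obstacle is making the operator identity $\dot{y}=\tfrac{pR(p)}{\bar{Q}(p)}u_2+\psi$ rigorous and controlling $\psi$: one must show that $\psi$ is a well-defined bounded signal and produce the explicit constant $\hat\phi$, which requires $\bar{Q}(p)$ to be Hurwitz and $R(p)/\bar{Q}(p)$ proper. Hurwitzness of $\bar{Q}(p)$ (equivalently of $\bar{A}$) is not assumed but follows \emph{a posteriori} from \eqref{eq_LMI3}, since $\bar{A}^{\rm T}H+H\bar{A}+\beta H\le0$ with $H>0$ and $\beta>0$ already implies that $\bar{A}$ is Hurwitz; properness, and the value of $\hat\phi$, follow from the structure $R(p)=L(pI-A)^{*}B$ and the definition of $\phi(t)$ together with $|f(t)|\le\bar{f}$ and the known initial data. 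Once $\hat\phi$ is fixed and used consistently in \eqref{eq_LMI2} and \eqref{eq_LMI3}, everything else is the same routine S-procedure/LMI bookkeeping as in the proof of Theorem~\ref{Th02}.
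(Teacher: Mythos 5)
Your proposal follows essentially the same route as the paper's own proof: the same closed-loop reduction $\bar{Q}(p)y=R(p)u_2+\phi$, the same Lyapunov functions $V_1=0.5\varepsilon^2$ and $V_2=x^{\rm T}Hx$, the same Young-inequality bound and cancellation by $\dot{u}_2$, and the same two S-procedure steps yielding \eqref{eq_LMI2} and \eqref{eq_LMI3}; your signal $\psi$ is exactly the paper's $\bar{\phi}=\frac{p}{\bar{Q}(p)}\phi$ with $\hat{\phi}=\sup\{\bar{\phi}(t)\}$. Your closing observation --- that boundedness of $\bar{\phi}$ requires $\bar{Q}(p)$ to be Hurwitz, which the paper asserts without comment but which indeed follows from $\bar{A}^{\rm T}H+H\bar{A}+\beta H\le0$ with $H>0$ --- is a point the paper leaves implicit, and is worth making explicit.
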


\begin{proof}
С учетом $u_1$ в \eqref{eq5_5}, перепишем \eqref{eq5_3} и \eqref{eq5_2} в виде
\begin{equation}
\label{eq5_new}
\begin{array}{l}
\bar{Q}(p)y(t)=R(p)u_2(t)+\phi(t),
\\
\xi=\bar{p}_1 y^{2} + p_3(|u_2|+\delta)^2.
\end{array}
\end{equation}
Здесь $\bar{Q}(p)=Q(p)-kR(p)$. Подставив \eqref{eq5_2} в \eqref{eq3_3}, получим
\begin{equation}
\label{eq5_4}
\begin{array}{l}
\dot{\varepsilon}=\left(\frac{\partial\Phi(\varepsilon,t)}{\partial \varepsilon}\right)^{-1}
\Big[
2 \bar{p}_1 y \frac{p R(p)}{\bar{Q}(p)}u_2 + 2 \bar{p}_1 y \bar{\phi}(t)
+ 2 p_3 (|u_2| + \delta) \textup{sign}(u_2) \dot{u}_2 - \frac{\partial\Phi(\varepsilon,t)}{\partial t}
\Big],
\end{array}
\end{equation}
где $\bar{\phi}(t)=\frac{p}{\bar{Q}(p)} \phi(t)$ -- ограниченная функция.
 Обозначим $\hat{\phi} = \sup \{\bar{\phi}(t)\}$.

Для анализа устойчивости \eqref{eq5_4} рассмотрим функцию Ляпунова \eqref{eq4_6}.
Взяв производную от \eqref{eq4_6} вдоль решений \eqref{eq5_4} и воспользовавшись неравенством Юнга, получим
\begin{equation}
\label{eq5_8}
\begin{array}{lll}
\dot{V}_1 = &
\left(\frac{\partial\Phi(\varepsilon,t)}{\partial \varepsilon}\right)^{-1}
\varepsilon
\Big[
2 \bar{p}_1 y \frac{p R(p)}{\bar{Q}(p)}u_2 + 2 \bar{p}_1 y \bar{\phi}(t)
+ 2 p_3 (|u_2| + \delta) \textup{sign}(u_2) \dot{u}_2 - \frac{\partial\Phi(\varepsilon,t)}{\partial t}
\Big]
\leq
\\
&\leq
\left(\frac{\partial\Phi(\varepsilon,t)}{\partial \varepsilon}\right)^{-1}
\varepsilon
\Big[
2 \bar{p}_1 y \frac{p R(p)}{\bar{Q}(p)}u_2 + 
\mu \bar{p}_1^2 \textup{sign}(\varepsilon) y^2 + \mu^{-1} \textup{sign}(\varepsilon) \bar{\phi}^2+
\\
&+ 2 p_3 (|u_2| + \delta) \textup{sign}(u_2) \dot{u}_2 - \frac{\partial\Phi(\varepsilon,t)}{\partial t}
\Big].
\end{array}
\end{equation}

С учетом $u_2$ из \eqref{eq5_5}, перепишем \eqref{eq5_8} как
\begin{equation}
\label{eq5_dot_V2}
\begin{array}{lll}
\dot{V}_1 = &
\left(\frac{\partial\Phi(\varepsilon,t)}{\partial \varepsilon}\right)^{-1}
\Big[
-\alpha \varepsilon^2 + \mu^{-1} \textup{sign}(\varepsilon) \varepsilon \bar{\phi}^2
- \varepsilon \frac{\partial\Phi(\varepsilon,t)}{\partial t}
\Big].
\end{array}
\end{equation}

Потребуем при $V_1 \geq c$ выполнение условия $\dot{V}_1 \leq 0$, принимая во внимание ограничения $\bar{\phi}^2 \leq \hat{\phi}^2$ и $\left( \frac{\partial\Phi(\varepsilon,t)}{\partial t} \right)^2 \leq \gamma^2$.
 Так как $\frac{\partial\Phi(\varepsilon,t)}{\partial t} > 0$ не влияет на знак выражения \eqref{eq4_6_dot_V1}, то перепишем вышеназванные условия в виде
\begin{equation}
\label{eq5_6_dot_V2}
\begin{array}{l}
- \alpha \varepsilon^2 
+ \mu^{-1} \varepsilon \textup{sign}(\varepsilon) \bar{\phi}^2
-\varepsilon \frac{\partial\Phi(\varepsilon,t)}{\partial t} 
\leq 0
~~~ \forall \left(\varepsilon, \bar{\phi}, \frac{\partial\Phi(\varepsilon,t)}{\partial t} \right):
\\
0.5 \varepsilon^2 \geq c,
~~~\bar{\phi}^2 \leq \hat{\phi}^2,
~~~\left( \frac{\partial\Phi(\varepsilon,t)}{\partial t} \right)^2 \leq \gamma^2.
\end{array}
\end{equation}

Обозначив $z=col \left \{\varepsilon,\bar{\phi},\frac{\partial\Phi(\varepsilon,t)}{\partial t} \right \}$, 
перепишем \eqref{eq5_6_dot_V2} в матричном виде:
\begin{equation}
\label{eq5_6_dot_V3}
\begin{array}{l}
z^{\rm T}
\begin{bmatrix}
-\alpha  & 0.5 \mu^{-1} \textup{sign}(\varepsilon) \bar{\phi} & -0.5 \\
* & 0 & 0 \\
* & * & 0
\end{bmatrix}
z \leq 0,
\\
z^{\rm T}
\begin{bmatrix}
-0.5  & 0 & 0 \\
* & 0 & 0 \\
* & * & 0
\end{bmatrix}
z \leq -1,
~~~
z^{\rm T}
\begin{bmatrix}
0  & 0 & 0 \\
* & 1 & 0 \\
* & * & 0
\end{bmatrix}
z \leq \bar{f}^2,
~~~

z^{\rm T}
\begin{bmatrix}
0  & 0 & 0 \\
* & 0 & 0 \\
* & * & 1
\end{bmatrix}
z \leq \gamma^2.
\end{array}
\end{equation}

Используя S-процедуру и условия разрешимости матричных неравенств с политопной неопределенностью $\textup{sign}(\varepsilon) \bar{\phi} = [-\hat{\phi},\hat{\phi}]$ \cite{Fridman10,Polyak14}, выражения \eqref{eq5_6_dot_V3} будут выполнены, если будут выполнены условия 
\eqref{eq_LMI2}.
Значит, $\varepsilon(t)$ ограниченная функция для любых $t$. 
Тогда из теоремы \ref{Th3_1} следует, что будет выполнено целевое условие \eqref{eq3_3_G}. 
Следовательно, сигналы $y$, $u_1$, $u_2$ и $u$ ограниченные.

Однако условие \eqref{eq3_3_G} может быть выполнено при $u$ стремящимся в пределе к нулю и $y$ стремящимся в пределе к границе интервала $\left(-\sqrt{\frac{\bar{g}(t)}{\bar{p}_1}};\sqrt{\frac{\bar{g}(t)}{\bar{p}_1}}\right)$. 
Несмотря на то, что это удовлетворяет поставленной цели \eqref{eq3_3_G}, на  практике такая ситуация может потребовать больших вычислительных затрат. 
Для избежания этого найдем дополнительные условия с использованием функции Ляпунова вида
\begin{equation}
\label{eq5_V2_1}
\begin{array}{l}
V_2=x^{\rm T} Hx
\end{array}
\end{equation}
и перепишем \eqref{eq5_new} как
\begin{equation}
\label{eq5_V2_1}
\begin{array}{l}
\dot{x}=\bar{A} x + Bu_2 + D\phi,
~~~
y=Lx.
\end{array}
\end{equation}

Потребуем при $V_2 \geq \frac{\inf{\bar{g}(t)}}{\bar{p}_1}$ выполнение условий $\dot{V} \leq 0$ и $H \geq \bar{p}_1 L^{\rm T}L$ (последнее означает, что эллипсоид $x^{\rm T} H x =1$ содержится внутри цилиндра $x^{\rm T} \bar{p}_1 L^{\rm T}L x =1$), принимая во внимание ограничения $u_2^2 \leq \frac{\inf{\bar{g}(t)}}{p_3}$ и $\phi^2 \leq \hat{\phi}^2$.
 Перепишем вышеназванные условия в виде
\begin{equation}
\label{eq5_V2_dot_1}
\begin{array}{l}
\dot{V}_2=
x^{\rm T}(\bar{A}^{\rm T}H+H\bar{A})x+2x^{\rm T}H B u_2+2x^{\rm T}H D \phi \leq 0
~~~ \forall (x, u_2, \phi):
\\
x^{\rm T} H x \geq \frac{\inf{\bar{g}(t)}}{\bar{p}_1},
~~~u_2^2 \leq \frac{\inf{\bar{g}(t)}}{p_3},
~~~\phi^2 \leq \hat{\phi}^2.
\end{array}
\end{equation}
Обозначив $s=col\{x, u_2, \phi \}$, перепишем \eqref{eq5_V2_dot_1} как
\begin{equation}
\label{eq5_V2_dot_2}
\begin{array}{l}
s^{\rm T}
\begin{bmatrix}
\bar{A}^{\rm T}H+H\bar{A}  & H B & H D \\
* & 0 & 0 \\
* & * & 0
\end{bmatrix}s \leq 0,
\\
- s^{\rm T}
\begin{bmatrix}
H  & 0 & 0 \\
* & -1 & 0 \\
* & * & 0
\end{bmatrix}
s \geq -\frac{\inf{\bar{g}(t)}}{\bar{p}_1},
\\
s^{\rm T}
\begin{bmatrix}
0  & 0 & 0 \\
* & -1 & 0 \\
* & * & 0
\end{bmatrix}
s \leq \frac{\inf\{\bar{g}(t)\}}{p_3},
~~
s^{\rm T}
\begin{bmatrix}
0  & 0 & 0 \\
* & 0 & 0 \\
* & * & -1
\end{bmatrix}
s \leq \hat{\phi}.
\end{array}
\end{equation}
Согласно S-процедуре \cite{Fridman10,Polyak14}, неравенства \eqref{eq5_V2_dot_2} будут выполнены, если будут выполнены условия \eqref{eq_LMI02}.
Теорема \ref{Th03} доказана.
\end{proof}


\sl{П\,р\,и\,м\,е\,р\,\,3}.
\rm
Рассмотрим неустойчивый объект \eqref{eq5_1} со следующими параметрами:
\begin{equation*}
\label{eq5_9}
\begin{array}{l}
A=
\begin{bmatrix}
0 & 1 & 0\\
0 & 0 & 1\\
3 & 5 & 1
\end{bmatrix},
~~~
B=
\begin{bmatrix}
0  \\
0 \\
1
\end{bmatrix},
~~~
D=
\begin{bmatrix}
0.1  \\
0.2 \\
1
\end{bmatrix},
~~~
L=
\begin{bmatrix}
1 & 2 & 1
\end{bmatrix},
~~~
x(0)=
1.1
\begin{bmatrix}
1  \\
1 \\
1
\end{bmatrix},
\end{array}
\end{equation*}
сигнал $f(t)$ определен в примере 2. 
Тогда $R(p)=(p+1)^2$, $Q(p)=(p+1)^3$ и $\hat{\phi}=0.22$.

Пусть в \eqref{eq_XU_5} определены следующие параметры: $p_1=0.1$, $p_0=0.01$, $l_y(t)=8$ и $l_u(t)=8$.

Зададим 
$\delta=0.01$ в \eqref{eq5_3}, $k=-4$ и $\mu=0.01$ в \eqref{eq5_5}, а также $T(\varepsilon)$ из примера 2. 
 С учетом $p_0=0.01$ имеем, что $p_{2}= 0.0101$ и $p_3=1.01$.
 Задав $\beta=0.1$ и $c=1$, неравенства \eqref{eq_LMI2} будут разрешимы, например, при $\alpha=20.2$.
 
На рис.~\ref{Fig_1_Ex3} изображены траектории по $\xi(t)$, $y(t)$, $u_1(t)$ и $u_2(t)$ при $\overline{g}=7e^{-0.1t}+1$ и $\underline{g}=4.95e^{-0.1t}+0.05$ в \eqref{eq2_3}, 
на рис.~\ref{Fig_2_Ex3} -- при $\overline{g}=3.5 \cos(0.5t)+4.5$ и $\underline{g}=\cos(0.5t)+1.05$ в \eqref{eq2_3}. Из рис.~\ref{Fig_1_Ex3} и \ref{Fig_2_Ex3} видно, что сигналы $\xi$, $y$, $u_1$ и $u_2$ никогда не покидают заданных ограничений, характер которых может быть дополнительно определен разработчиком произвольно, например, в первом случае экспоненциальными функциями, а во втором -- синусоидальными.

\begin{figure}[h]
\begin{minipage}[h]{0.49\linewidth}
\center{\includegraphics[width=1\linewidth]{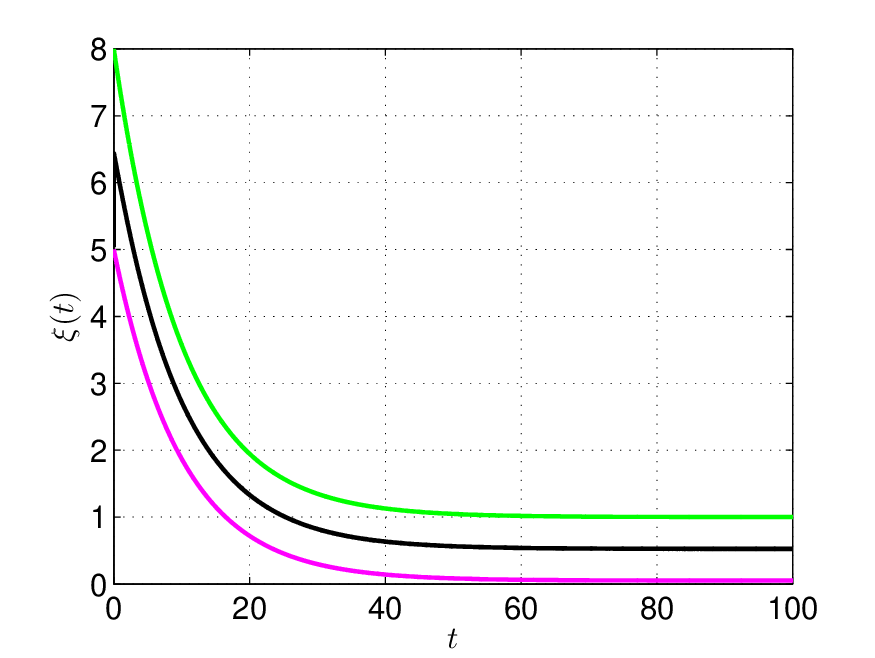}}
\end{minipage}
\hfill
\begin{minipage}[h]{0.49\linewidth}
\center{\includegraphics[width=1\linewidth]{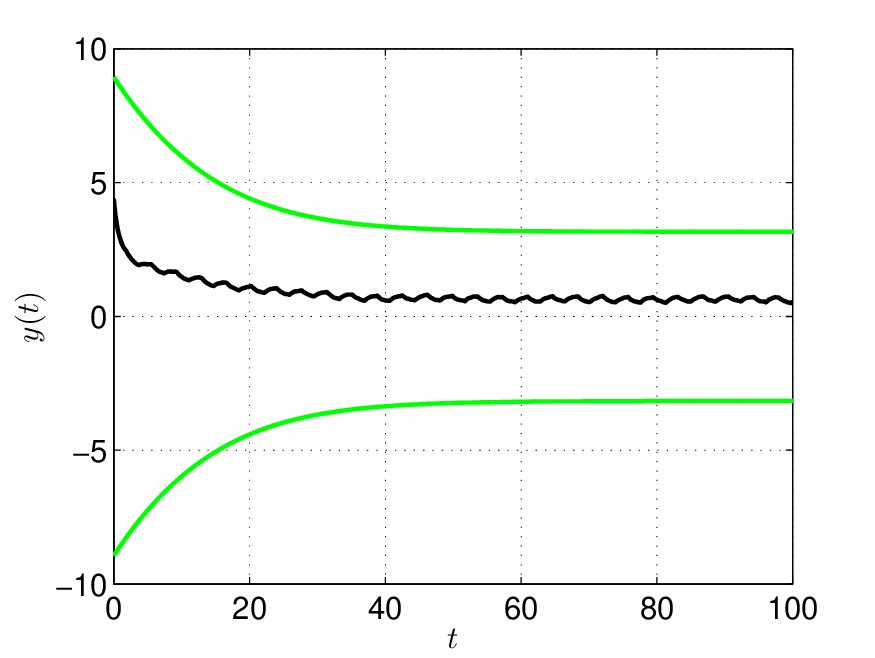}}
\end{minipage}
\vfill
\begin{minipage}[h]{0.49\linewidth}
\center{\includegraphics[width=1\linewidth]{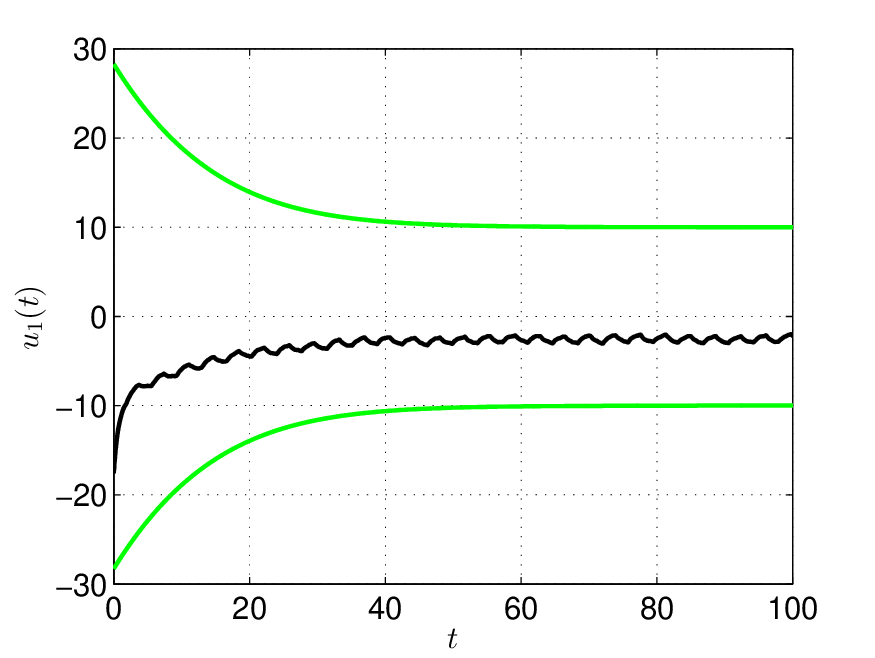}}
\end{minipage}
\hfill
\begin{minipage}[h]{0.49\linewidth}
\center{\includegraphics[width=1\linewidth]{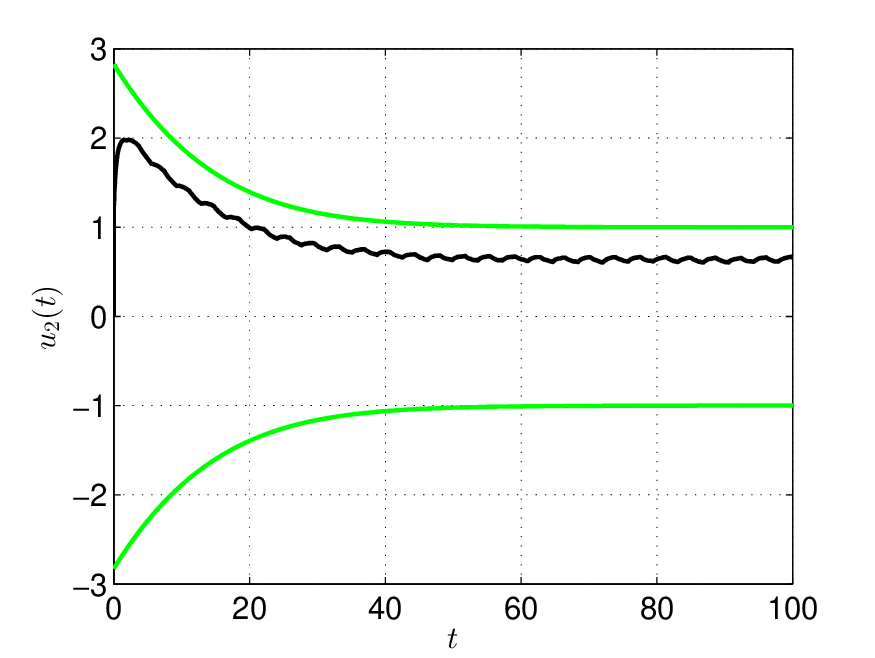}}
\end{minipage}
\caption{Переходные процессы по $\xi(t)$, $y(t)$, $u_1(t)$ и $u_2(t)$.}
\label{Fig_1_Ex3}
\end{figure}

\begin{figure}[h]
\begin{minipage}[h]{0.49\linewidth}
\center{\includegraphics[width=1\linewidth]{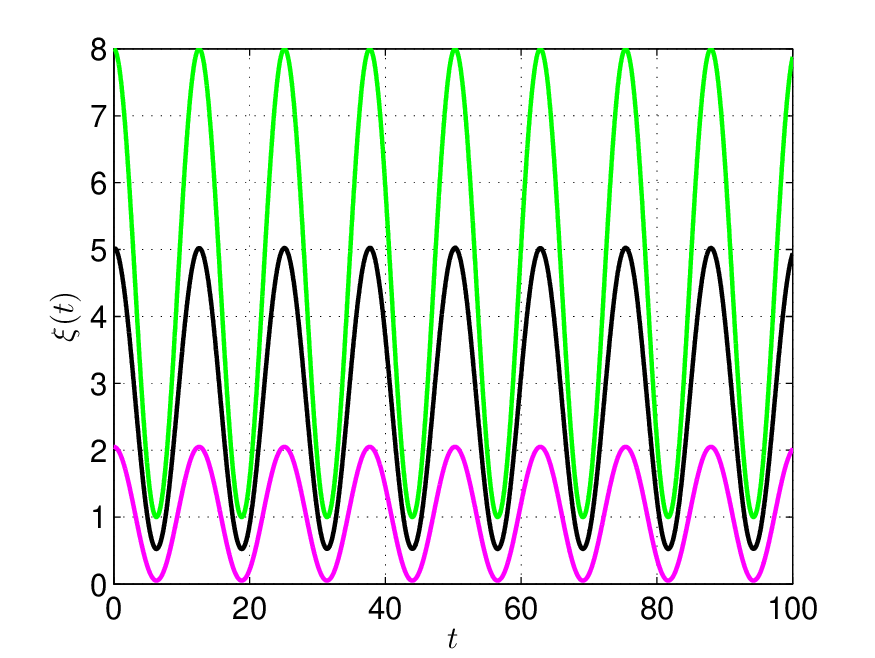}}
\end{minipage}
\hfill
\begin{minipage}[h]{0.49\linewidth}
\center{\includegraphics[width=1\linewidth]{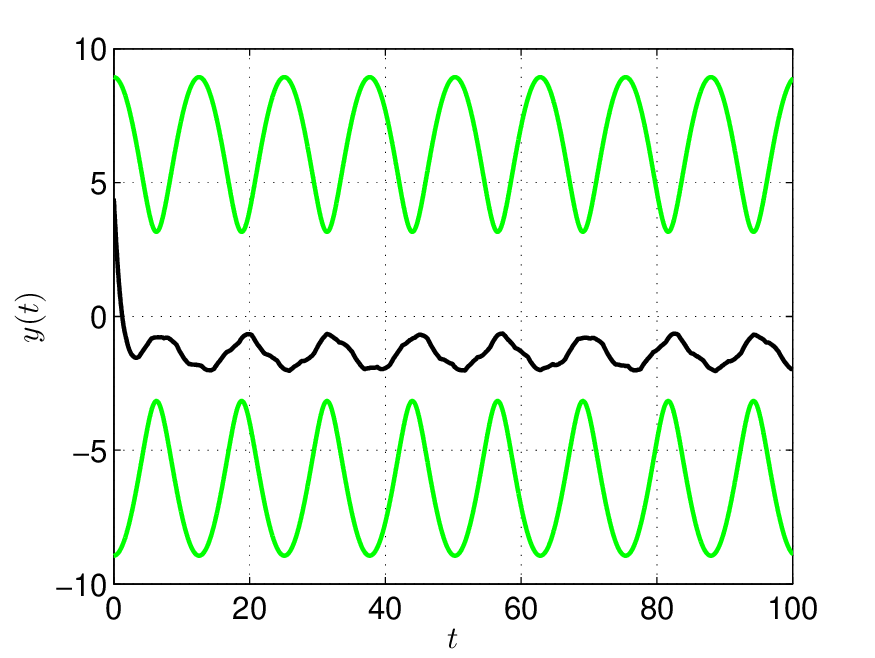}}
\end{minipage}
\vfill
\begin{minipage}[h]{0.49\linewidth}
\center{\includegraphics[width=1\linewidth]{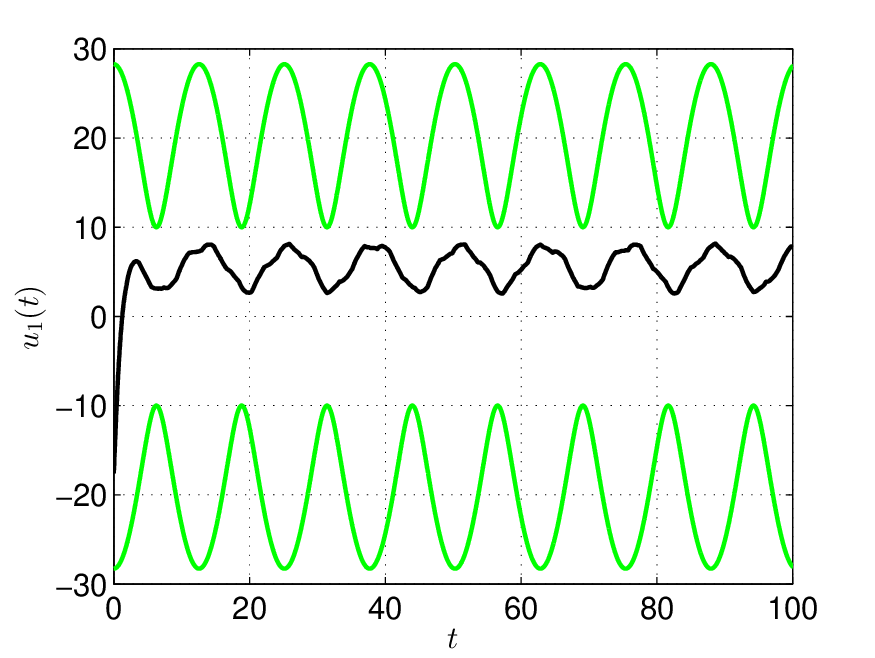}}
\end{minipage}
\hfill
\begin{minipage}[h]{0.49\linewidth}
\center{\includegraphics[width=1\linewidth]{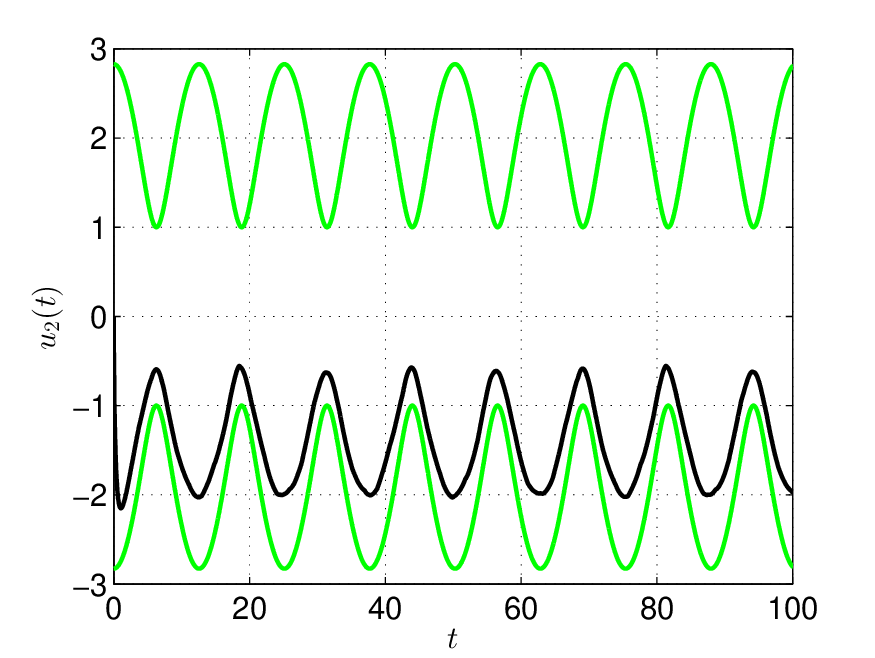}}
\end{minipage}
\caption{Переходные процессы по $\xi(t)$, $y(t)$, $u_1(t)$ и $u_2(t)$.}
\label{Fig_2_Ex3}
\end{figure}

\section{Заключение}
В статье предложено развитие метода \cite{Furtat21} на динамические системы с произвольным соотношением числа управлений и выходных сигналов и гарантией их нахождения в заданных множествах. 
Разработанный метод применяется для решения задач управления по состоянию и по выходу линейными системами с учетом ограничений на сигнал управления и выходные переменные, где размерность регулируемых переменных больше размерности сигнала правления. 
Также, в отличие от \cite{Furtat21}, устойчивость замкнутой системы и синтез параметров регулятора сформулированы в терминах разрешимости линейных матричных неравенств.
Результаты моделирования подтвердили теоретические выводы и показали эффективность предложенного метода при наличие параметрической неопределенности и возмущений.

~\

~\

%


\begin{thebibliography}{10}

\bibitem{Furtat21}
{\it Фуртат И.Б., Гущин П.А.}
Управление динамическими объектами с гарантией нахождения регулируемого сигнала в заданном множестве // АиТ. 2021. № 4. С. 121-139.

\bibitem{Furtat21_IJC}
{\it Furtat I., Gushchin P.}
Nonlinear feedback control providing plant output in given set // International Journal of Control. 2021. https://doi.org/10.1080/00207179.2020.1861336

\bibitem{Miroshnik00}
{\it Мирошник И.В., Никифоров В.О., Фрадков А.Л.} 
Мирошник И.В., Никифоров В.О., Фрадков А.Л. Нелинейное и адаптивное управление сложными
динамическими системами. СПб: Наука, 2000.

\bibitem{Spong01}
{\it Spong M., Corke P., Lozano R.} 
Nonlinear control of the reaction wheel pendulum // Automatica. 2001. V. 37. P. 1845-1851.

\bibitem{Olfati01}
{\it Olfati-Saber R.} 
Global stabilization of a flat underactuated system: the inertia wheel pendulum // 
Proc. 40th Conf. Decision and Control, Orlando, USA. 2001. P. 3764-3765.

\bibitem{Ortega02}
{\it Ortega R., Spong M., Gomez-Estern F., Blankenstein G.} 
Stabilization of a class of underactuated mechanical systems via interconnection and damping assignment // 
IEEE Trans. Autom. Control. 2002. V. 47. P. 1218-1233.

\bibitem{Grishin02}
{\it Гришин А.А., Ленский А.В., Охоцимский Д.Е., Панин Д.А., Формальский А.М.} 
О синтезе управления неустойчивым объектом. Перевернутый маятник // Изв. РАН. Теория и системы управления. 2002. № 5. С. 14-24.

\bibitem{Andrievski04}
{\it Андриевский Б.Р.} 
Стабилизация перевернутого маятника с инерционным маховиком в качестве движителя. 
Управление в физико-технических системах / Под ред. А.Л. Фрадкова. СПб: Наука, 2004. С. 52-71.

\bibitem{Astrom07}
{\it Astrom K.J., Block D.J., Mark W., Spong M.} 
The Reaction Wheel Pendulum. Synthesis Lectures on Controls
and Mechatronics // Morgan and Claypool Publishers. 2007. 105 p.

\bibitem{Liu13}
{\it Liu Y., Yu H.} 
A survey of underactuated mechanical systems //
IET Control Theory and Applications. 2013. Vol. 7, no. 7. P. 921-935. DOI: 10.1049/iet-cta.2012.0505

\bibitem{Liu20}
{\it  Liu P., Nazmul Huda M., Sun L., Yu H.}
A survey on underactuated robotic systems: Bio-inspiration, trajectory planning and control //
Mechatronics. 2020. V. 72. P. 102443. https://doi.org/10.1016/j.mechatronics.2020.102443

\bibitem{Sun20}
{\it Sun W., Su S.F., Xia J., Wu  Y.}
Adaptive tracking control of wheeled inverted pendulums with periodic disturbances // 
IEEE Trans. on Cybernetics. 2020. V. 50, no. 5. P. 1867-1876.

\bibitem{Saleem21}
{\it  Saleem O., Mahmood-ul-Hasan K.}
Adaptive State-space Control of Under-actuated Systems Using Error-magnitude Dependent Self-tuning of Cost Weighting-factors //
International Journal of Control, Automation and Systems. 2021. V. 19. P. 931-941. 

\bibitem{Fridman10}
{\it Fridman E.}
A refined input delay approach to sampled-data control // Automatica. 2010. V. 46. P. 421–427.

\bibitem{Polyak14}
{\it Поляк Б.Т., Хлебников М.В., Щербаков П.С.}
Управление линейными системами при внешних возмущениях: Техника линейных матричных неравенств. М: Ленанд, 2014.



\end{thebibliography}
\end{document}